\documentclass[12pt,reqno]{article}

\usepackage[usenames]{color}
\usepackage{amssymb}
\usepackage{graphicx}
\usepackage{amscd}

\usepackage[colorlinks=true,
linkcolor=webgreen,
filecolor=webbrown,
citecolor=webgreen]{hyperref}

\definecolor{webgreen}{rgb}{0,.5,0}
\definecolor{webbrown}{rgb}{.6,0,0}

\usepackage{color}
\usepackage{fullpage}
\usepackage{float}

\usepackage{graphics,amsmath,amssymb}
\usepackage{amsthm}
\usepackage{amsfonts}
\usepackage{latexsym}
\usepackage{epsf}

\usepackage{verbatim}
\usepackage{enumerate}

\usepackage{tikz}
\usepackage[blocks]{authblk}

\theoremstyle{plain}
\newtheorem{theorem}{Theorem}
\newtheorem{corollary}[theorem]{Corollary}

\newtheorem{proposition}[theorem]{Proposition}
\newtheorem{conjecture}[theorem]{Conjecture}

\theoremstyle{definition}

\newtheorem{example}{Example}

\theoremstyle{remark}

\begin{document}

\title{Maximum Number of Quads}
\author{Nikhil Byrapuram}
\author{Hwiseo (Irene) Choi}
\author{Adam Ge}
\author{Selena Ge}
\author{Sylvia Zia Lee}
\author{Evin Liang}
\author{Rajarshi Mandal}
\author{Aika Oki}
\author{Daniel Wu}
\author{Michael Yang}
\affil{PRIMES STEP}
\author{Tanya Khovanova}
\affil{MIT}
%\date

\maketitle

\begin{abstract}
We study the maximum number of quads among $\ell$ cards from an EvenQuads deck of size $2^n$. This corresponds to enumerating quadruples of integers in the range $[0,\ell-1]$ such that their bitwise XOR is zero. In this paper, we conjecture a formula that calculates the maximum number of quads among $\ell$ cards.
\end{abstract}

\section{Introduction}

A notion of quad is a generalization of a notion of set from the game of SET. The game of SET is played with a deck of 81 cards \cite{MGGG}, where each card is characterized by 4 attributes:
\begin{itemize}
\item Number: 1, 2, or 3 symbols.
\item Color: green, red, or purple.
\item Shading: empty, striped, or solid.
\item Shape: oval, diamond, or squiggle.
\end{itemize}

A set is formed by three cards that are either all the same or all different in each attribute. The players try to find sets among given cards, and the fastest player wins.

In this paper, we are interested in a different game called EvenQuads, introduced by Rose and Perreira \cite{Rose}, which is a generalization of the SET game. The EvenQuads deck consists of $64 = 4^3$ cards with different objects. The cards have 3 attributes with 4 values each:
\begin{itemize}
\item Number: 1, 2, 3, or 4 symbols.
\item Color: red, green, yellow, or blue.
\item Shape: square, icosahedron, circle, or spiral.
\end{itemize}

A quad consists of four cards so that for each attribute, the values of the cards must be one of three cases: all the same, all different, or half and half. We can view cards as integers from 0 to 63 and quads as four numbers that bitwise XOR to 0. More about the basic properties of the game can be found in \cite{CragerEtAl,LAGames}. Latin, magic, semimagic, and other squares based on the quad notion can be found in \cite{QuadSquares}.

One of the most famous questions in the game of set is the question of the largest size of the no-set. Such sets are also called cap sets, and their sizes are an active area of research \cite{MGGG}. It is known that the largest possible number of cards in the standard deck not containing a set is 20. The same value for any dimension is described by sequence A090245$(n)$, defined as the maximum number of cards that would have no set in an $n$-attribute version of the SET card game. The maximum known value is for 6 attributes:
\[1,\ 2,\ 4,\ 9,\ 20,\ 45,\ 112.\]

A related question is the following. Given the deck size and the number of cards, how many sets are possible? In particular, we can ask what the maximum number of sets is among given cards. This question for the standard deck is discussed in \cite{Vinci}.

In this paper, we are interested in the corresponding question for the EvenQuads deck. Given the size of the deck and the number of cards, what is the maximum possible number of quads?

Section~\ref{sec:background} covers the background information.

In Section~\ref{sec:packed}, we define a packed set as a set of cards from a given deck that realizes the maximum possible number of quads. We provide examples for fewer than 9 cards. We also discuss the symmetries of the EvenQuads deck.

In Section~\ref{sec:complementary}, we discuss complementary sets: disjoints sets whose union is the whole deck. We prove a formula for the difference in the number of quads in such sets. Interestingly, this difference depends only on the number of cards in these sets. This means that if one set is packed, then its complementary set is also packed.

In Section~\ref{sec:conjectures}, we introduce a conjecture that leads to a claim that a set of $\ell$ cards numbered 0 through $\ell -1$ is packed. In particular, the number of quads in a packed set does not depend on the size of a deck the cards are chosen from, as long as the deck size is not smaller than the number of cards. Thus, we can view it as a function of $\ell$, which we denote as $Q(\ell)$.

In Section~\ref{sec:ncps}, based on our conjecture, we calculate $Q(\ell)$: the number of quads in a packed set of size $\ell$.

In Section~\ref{sec:partialsums}, we provide the formula for the sequence that is partial sums of $Q(\ell)$.

\section{Preliminaries}
\label{sec:background}

The EvenQuads deck consists of $64 = 4^3$ cards with different objects. The cards have 3 attributes with 4 values each:
\begin{itemize}
\item Number: 1, 2, 3, or 4 symbols.
\item Color: red, green, yellow, or blue.
\item Shape: square, icosahedron, circle, or spiral.
\end{itemize}

A \textit{quad} consists of four cards so that for each attribute, the values of the cards must be one of three cases: all the same, all different, or half and half. We can assume that each attribute takes values in the set $\{0,1,2,3\}$. Then, four cards form a quad if and only if the bitwise XOR of the values in each attribute is zero. This is equivalent to saying that each attribute takes values in $\mathbb{Z}_2^2$, and four cards form a quad if and only if for each attribute, the four vectors in $\mathbb{Z}_2^2$ sum to the zero vector.

Thus, we can view our cards as vectors in $\mathbb{Z}_2^6$. For generalizations, we can consider an EvenQuads deck of sizes $2^n$. The cards in such a deck correspond to vectors in $\mathbb{Z}_2^n$. Four vectors $\vec{a}$, $\vec{b}$, $\vec{c}$, and $\vec{d}$ in $\mathbb{Z}_2^n$ form a quad if and only if (see \cite{CragerEtAl})
\[\vec{a} + \vec{b} + \vec{c} + \vec{d} = \vec{0}.\]
Consider four vectors $\vec{a}$, $\vec{b}$, $\vec{c}$, and $\vec{d}$ forming a quad. By a translation, we can assume that $\vec{a}$ is the origin. Then, from the above equation we get $\vec{d} = - \vec{b} - \vec{c} = \vec{b} + \vec{c}$. This means that vector $\vec{d}$ is in the same plane as the origin and vectors $\vec{b}$ and $\vec{c}$. Thus, four cards form a quad if and only if their endpoints belong to the same plane in $\mathbb{Z}_2^n$.

In the rest of the paper, we often use numbers from 0 to $2^n-1$ inclusive to label the cards in the EvenQuads-$2^n$ deck. Four numbers form a quad if and only if their bitwise XOR is 0.

The following statements are left for the reader to check.
\begin{itemize}
\item In a given deck, any three cards can be completed into a quad.
\item It follows that two different quads can have at most two common cards.
\end{itemize}

We are interested in the number of possible quads given $\ell$ cards. Suppose that there exists a set of cards that produces $q$ quads with $\ell$ cards from an EvenQuads-$2^n$ deck. By adding more cards to the deck, we can conclude that any larger deck can also have $\ell$ cards with $q$ quads. Thus, it is enough to find the smallest such deck. Denote by $D(\ell,q)$ the smallest deck size $2^n$ such that there exist $\ell$ cards from the EvenQuads-$2^n$ deck that form exactly $q$ quads. If such a deck does not exist, we define $D(\ell,q)$ as $\infty$.

Values of $D(\ell,q)$ for $\ell \leq 8$ were calculated in \cite{ECC} and shown in Table~\ref{table:possibleNumQuads}. The first column represents the number of cards, and the first row represents the number of quads. If the number of cards is $\ell$ and the number of quads is $q$, the entry corresponding to $\ell$ and $q$ shows $D(\ell,q)$. We replaced the infinity sign with an empty entry so as not to clutter the table.

\begin{table}[ht!]
\begin{center}
\begin{tabular}{|c|c|c|c|c|c|c|c|c|c|c|c|c|c|c|c|}
\hline
\# of cards$\backslash$quads	& 0	& 1     & 2    & 3   & 4 & 5   & 6 & 7 & 8 & 9 & 10 & 11 & 12 & 13 & 14        \\ \hline
1 		& 1	&              &              &              &   &              &   &   &  &  &  &  &  &  &           \\ \hline
2 		& 2	&              &              &              &   &              &   &   &  &  &  &  &  &  &              \\ \hline
3 		& 4 	&              &              &              &   &              &   &    &  &  &  &  &  &  &             \\ \hline
4 		& 8 	& 4 		&              &              &   &              &   &    &  &  &  &  &  &  &             \\ \hline
5 		& 16	& 8 		&              &              &   &              &   &   &  &  &  &  &  &  &              \\ \hline
6 		& 16	& 16 		&              & 8 		&   &              &   &    &  &  &  &  &  &  &             \\ \hline
7 		& 32	& 32		& 16		& 16		&   & 		 &   & 8  &  &  &  &  &  &  &  \\ \hline
8 		& 64	& 32		& 32		& 32		&   & 16     &  16   & 16  &  &  &  &  &  &  & 8 \\ \hline
\end{tabular}
\end{center}
\caption{$D(\ell,q)$ for $\ell \leq 8$.}
\label{table:possibleNumQuads}
\end{table}

\subsection{Symmetries}

There is a natural approach to symmetries of quads using linear algebra. Our space is an affine space (the same as a vector space, but without choosing the origin). The symmetries of the space are affine transformations. Affine transformations consist of parallel translations and linear transformations. A linear transformation is defined by an invertible matrix.

We can view an affine transformation as a pair $(M,\vec{t})$, where $M$ is an invertible $n$-by-$n$ matrix over the field $\mathbb{F}_2$ and $\vec{t}$ is a translation vector in $\mathbb{F}_2^n$. The pair acts on vector $\vec{a}$ as $M\vec{a} + \vec{t}$.

An affine transformation preserves quads. Indeed if $\vec{a_i}$, for $1 \leq i \leq 4$ form a quad then the transformation produces four vectors $\vec{b_i} = M\vec{a_i} + \vec{t}$. We have
\[\sum_{i=1}^4\vec{b_i} = \sum_{i=1}^4(M\vec{a_i} + \vec{t}) = \sum_{i=1}^4 M\vec{a_i} + \sum_{i=1}^4\vec{t} = M\sum_{i=1}^4\vec{b_i} + 4\vec{t} = 0,\]
thus vectors $\vec{b_i}$ form a quad.

\section{Maximum numbers of quads: packed sets}
\label{sec:packed}

What is the maximum number of quads in a selection of $\ell$ cards? We define an $\ell$-packed set for a given value $\ell$ to be a set of $\ell$ cards from some EvenQuads deck such that the number of quads within these cards is maximized. In other words, given the number of cards $\ell$, we are looking at the largest index $q$, such that $D(\ell,q)$ is not infinity. We denote this number as $Q(\ell)$. From our later conjecture, it follows that $Q(\ell)$ does not depend on the size of the deck the cards are chosen from.

The maximum number of quads for small values of $n$ was calculated in \cite{ECC} and can be found in Table~\ref{table:possibleNumQuads}. For a given number of cards $\ell$, the value of $Q(\ell)$ is the largest possible column number such that Table~\ref{table:possibleNumQuads} has an entry in this column in row $\ell$. Here are the values of $Q(\ell)$ starting with $\ell = 1$.
\[0,\ 0,\ 0,\ 1,\ 1,\ 3,\ 7,\ 14,\ \ldots.\]

Let us define a \textit{complete} set as a set of cards such that any three cards from that set can be completed into a quad using a fourth card from that set. It means the fourth card belongs to a plane that passes through the given three cards.

\begin{proposition}
If a set of $\ell$ cards is complete, then $\ell$ is a power of 2. Moreover, for any $\ell$ that is a power of 2, there exists a set of $\ell$ cards that is complete.
\end{proposition}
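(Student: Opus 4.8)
The plan is to recognize that a complete set is exactly an affine subspace of $\mathbb{Z}_2^n$. The forward direction says: a complete set has size a power of $2$; the converse says every power of $2$ is realized, which is easy since any affine subspace is complete.

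\medskip

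\textbf{Approach.} First I would translate so that $\vec{0}$ belongs to the complete set $S$; by the symmetry discussion this does not change completeness or cardinality. Now the key observation is the quad relation: if $\vec{a},\vec{b},\vec{c}\in S$, the unique fourth card completing them is $\vec{a}+\vec{b}+\vec{c}$, and completeness says this lies in $S$. Taking $\vec{c}=\vec 0$ gives the crucial closure statement: for all $\vec a,\vec b\in S$, the sum $\vec a+\vec b\in S$. Hence $S$ is closed under addition and contains $\vec 0$; since every element of $\mathbb{Z}_2^n$ is its own inverse, $S$ is a subgroup of $(\mathbb{Z}_2^n,+)$, i.e.\ a linear subspace over $\mathbb{F}_2$. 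A linear subspace of dimension $k$ has exactly $2^k$ elements, so $\ell=|S|=2^k$ is a power of $2$. (One should also note the degenerate cases $\ell=1,2$: a single card or a pair is vacuously or trivially complete, and these are $2^0$ and $2^1$, so they fit.)

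\medskip

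\textbf{Converse.} For the second statement, given $\ell=2^k$, I would exhibit an explicit complete set: take $n\ge k$ and let $S$ be the span of the first $k$ standard basis vectors $\vec e_1,\dots,\vec e_k$ in $\mathbb{Z}_2^n$, equivalently the set of cards numbered $0$ through $2^k-1$. This $S$ has $2^k$ elements. Completeness: for any three cards $\vec a,\vec b,\vec c\in S$, their completing card is $\vec a+\vec b+\vec c$, which lies in $S$ because $S$ is a linear subspace and hence closed under addition. So $S$ is complete, finishing the proof.

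\medskip

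\textbf{Main obstacle.} There is no serious obstacle here; the only thing to be careful about is the logical structure of ``complete.'' The definition quantifies over \emph{any three cards} and allows the completing fourth card to equal one of them when the three are not distinct in the needed way — one must make sure the argument reducing to pairwise closure ($\vec c=\vec 0$) is legitimate, and handle the small cases $\ell\le 2$ where ``three distinct cards'' may not even exist, so that the claim $\ell=2^k$ still holds for $k=0,1$. Once that bookkeeping is settled, the proof is a short application of the subgroup criterion for $\mathbb{Z}_2^n$.
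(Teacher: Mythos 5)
Your proof is correct and follows essentially the same route as the paper's: translate so that $\vec{0}$ is in the set, use the quad through $\vec{0},\vec{a},\vec{b}$ to deduce closure under addition, conclude the set is an $\mathbb{F}_2$-subspace of size $2^k$, and for the converse observe that any subspace is closed under addition and hence complete. Your extra attention to the degenerate cases $\ell\le 2$ is fine but not needed beyond what the subspace argument already gives.
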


\begin{proof}
Consider the cards as elements of $\mathbb Z_2^k$, with numbering chosen so that $\vec{0}$ is one of the $\ell$ cards. Then, since $\vec{0}$, $\vec{a}$, and $\vec{b}$ can be completed into a quad using a card from the given $\ell$ cards for any cards $\vec{a}$ and $\vec{b}$, the set of $\ell$ cards is closed under addition. Thus, the set of cards forms a vector space over $\mathbb Z_2$, so the number of cards is a power of 2.

On the other hand, any subspace of a vector space is closed under addition. That means, given 3 cards from this subspace, the fourth card that completes the quad belongs to the same subspace.
\end{proof}

We can estimate the upper bound for the number of quads in the $\ell$ cards set.

\begin{theorem}
\label{thm:bound}
Given $\ell$ cards, they contain at most $\frac{\binom \ell3}{4}$ quads. Equality occurs if and only if the set is complete.
\end{theorem}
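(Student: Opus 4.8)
The plan is to count incidences between quads and triples of cards. Any quad contains exactly $\binom{4}{3} = 4$ triples of cards, and by the remark in the preliminaries, two distinct quads share at most two cards, so no triple lies in two different quads. Hence each triple is contained in at most one quad. Since there are $\binom{\ell}{3}$ triples among our $\ell$ cards, and each quad we find ``uses up'' $4$ of them with no overlap, the number of quads is at most $\binom{\ell}{3}/4$, giving the bound.

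For the equality characterization, I would argue in both directions. If the set is complete, then \emph{every} triple of the $\ell$ cards extends to a quad whose fourth card also lies in the set, so every triple is contained in exactly one quad; the incidence count is then exact, $4 \cdot (\#\text{quads}) = \binom{\ell}{3}$, and equality holds. Conversely, if equality holds, then the counting argument forces every one of the $\binom{\ell}{3}$ triples to be contained in a quad lying entirely within the set (otherwise some triple is in no quad and the count drops), which is exactly the definition of a complete set.

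The main thing to be careful about — the only real content beyond bookkeeping — is the claim that a triple determines its quad uniquely, i.e. that distinct quads meet in at most two cards. This is exactly the second bulleted fact ``left for the reader'' in the preliminaries: given three cards $\vec a, \vec b, \vec c$, the fourth card of any quad containing them must be $\vec a + \vec b + \vec c$, which is uniquely determined. So each triple has at most one ``completion'' among all cards, and in particular at most one completing card inside the set. I would state this explicitly as the key lemma of the proof. The rest is the double counting described above; I expect no genuine obstacle, only the need to phrase the equality case cleanly (that equality $\iff$ every triple is completed within the set $\iff$ completeness).
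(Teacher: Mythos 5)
Your argument is correct and matches the paper's proof: both are the same double count of triples against quads, using the fact that three cards determine the fourth (so distinct quads give disjoint sets of four triples), with equality precisely when every triple is completed inside the set, i.e., the set is complete. You simply make the uniqueness-of-completion lemma explicit, which the paper leaves implicit.
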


\begin{proof}
Take any three cards out of those $\ell$ cards. Some triplets can be completed into quads using only cards from those $\ell$ cards. There are at most $\binom \ell3$ such triplets, and there will be exactly $\binom \ell3$ such triplets if and only if every triplet can be completed into a quad using a card from those $\ell$ cards. If a triplet can be completed into a quad using cards from those $\ell$ cards, then there are four triplets that yield that quad. So there are at most $\frac{\binom \ell3}{4}$ quads using cards chosen from $\ell$ cards.
\end{proof}

The numbers $\binom \ell3$ are also called tetrahedral numbers, see sequence A000292: 
\[0,\ 0,\ 1,\ 4,\ 10,\ 20,\ 35,\ 56,\ 84,\ 120,\ 165,\ 220,\ 286,\ \ldots.\]
Our bound is 
\[\left\lfloor \frac{\binom \ell3}{4} \right\rfloor,\]
which is a shifted sequence A011842 (our index starts at 1):
\[0,\ 0,\ 0,\ 1,\ 2,\ 5,\ 8,\ 14,\ 21,\ 30,\ 41,\ 55,\ 71,\ 91,\ 113,\ 140,\ 170,\ 204,\ 242,\ \ldots.\]

\begin{example}
If $\ell = 2^k$, where $k$ is a non-negative integer, then the maximum number of quads is $\frac{\binom \ell3}{4}$. In other words, $Q(2^k) = \frac{\binom \ell3}{4}$.The corresponding sequence as a function of $k$ for $k > 1$ is sequence A016290:
\[1,\ 14,\ 140,\ 1240,\ 10416,\ 85344,\ 690880,\ 5559680,\ \ldots.\]
\end{example}

We can extend this example to numbers one greater than a power of 2.

\begin{proposition}
\label{prop:afterpower2}
If $\ell = 2^k+1$, where $k$ is a non-negative integer, then the maximum number of quads is achieved when our points form a $k$-dimensional hyperplane with an extra point. It follows that $Q(2^k+1) = Q(2^k)$.
\end{proposition}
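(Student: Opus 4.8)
The plan is to split the statement into a construction giving the lower bound $Q(2^k+1)\ge Q(2^k)$ and a matching upper bound $Q(2^k+1)\le Q(2^k)$; combined with the Example following Theorem~\ref{thm:bound}, which gives $Q(2^k)=\binom{2^k}{3}/4$, this yields the claim. For the construction, fix $n\ge k+1$, take a $k$-dimensional subspace $V\subseteq\mathbb{Z}_2^n$, pick any $p\notin V$, and set $S=V\cup\{p\}$, so $|S|=2^k+1$. Since $V$ is complete, it contains exactly $\binom{2^k}{3}/4$ quads. The point $p$ lies in no quad of $S$: a quad through $p$ would consist of $p$ together with three further distinct cards of $S$, which must all lie in $V$ (as $p$ is the only card of $S$ outside $V$ and a card cannot repeat), whence $p$ is a sum of three elements of $V$ and so $p\in V$, a contradiction. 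Hence $S$ has exactly $Q(2^k)$ quads, and it remains to prove the reverse inequality.

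For the upper bound I would first express the number of quads of an arbitrary $\ell$-card set $S$ through its additive energy $E(S):=\sum_{v}|S\cap(S+v)|^2$. Counting ordered quadruples $(a,b,c,d)\in S^4$ with $a+b+c+d=\vec 0$ in two ways --- grouping by the common value $a+b=c+d$ gives $E(S)$, while separating the $24$ orderings of each quad from the $6\binom{\ell}{2}+\ell$ ``paired'' tuples (those whose entries form a multiset $\{x,x,y,y\}$) gives $24\cdot\#\mathrm{quads}(S)+6\binom{\ell}{2}+\ell$ --- one obtains
\[
\#\mathrm{quads}(S)=\frac{E(S)-3\ell^2+2\ell}{24}.
\]
A direct computation gives $E(V\cup\{p\})=2^{3k}+6\cdot 2^k+1$, which recovers $\binom{2^k}{3}/4$ when $\ell=2^k+1$. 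Thus $Q(2^k+1)\le Q(2^k)$ is equivalent to the additive-combinatorial inequality $E(S)\le 2^{3k}+6\cdot 2^k+1$ for every $S$ with $|S|=2^k+1$; equivalently, since $\binom{2^k+1}{3}=\binom{2^k}{3}+\binom{2^k}{2}$ and (as in the proof of Theorem~\ref{thm:bound}) $\#\mathrm{quads}(S)=\frac{1}{4}\big(\binom{\ell}{3}-B(S)\big)$, where $B(S)$ counts triples of $S$ not completable to a quad inside $S$, every such $S$ must have at least $\binom{2^k}{2}$ ``bad'' triples.

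To prove the energy inequality I would induct on $d=\dim\mathrm{span}(S)$ (translating so $\vec 0\in S$, so that $d\ge k+1$): for $d\ge k+2$, split $S$ along a linear hyperplane into layers $S_0,S_1$, expand
\[
E(S)=E(S_0)+E(S_1)+2\sum_{w}|S_0\cap(S_0+w)|\,|S_1\cap(S_1+w)|+4\sum_{w}|S_0\cap(S_1+w)|^2,
\]
and show that replacing one layer by a set of the same size that is ``nested'' with the other does not decrease $E(S)$, iterating down to $d=k+1$, which would be handled by a direct case analysis. The step I expect to be the main obstacle is exactly this compression inequality, together with the structural fact it encodes: a $(2^k+1)$-card set cannot have many translation directions $v$ with $|S\cap(S+v)|$ close to its maximum $2^k$. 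The naive linear-programming relaxation --- maximizing $\sum_v|S\cap(S+v)|^2$ subject only to $|S\cap(S+\vec 0)|=\ell$, $\sum_v|S\cap(S+v)|=\ell^2$, and $|S\cap(S+v)|\le\ell$ --- is \emph{not} tight (it allows about $2^k+1$ such ``heavy'' directions, one too many), so genuine additive structure is needed, and controlling the cross term $\sum_w|S_0\cap(S_1+w)|^2$ under compression is where the real difficulty lies.
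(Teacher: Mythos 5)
Your lower-bound half is correct and is the same construction the paper uses: a $k$-dimensional subspace $V$ together with one outside card $p$, plus the observation that $p$ lies in no quad because a sum of three elements of $V$ stays in $V$; hence $Q(2^k+1)\ge Q(2^k)$. Your energy bookkeeping is also arithmetically sound: the identity $\#\mathrm{quads}(S)=\frac{1}{24}\left(E(S)-3\ell^2+2\ell\right)$, the value $E(V\cup\{p\})=2^{3k}+6\cdot 2^k+1$, and the reformulation ``at least $\binom{2^k}{2}$ non-completable triples'' all check out.

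The genuine gap is that the upper bound $Q(2^k+1)\le Q(2^k)$ --- which is the entire content of the proposition --- is never established. You reduce it to the inequality $E(S)\le 2^{3k}+6\cdot 2^k+1$ for every $S$ of size $2^k+1$ and then sketch a compression induction on $\dim\mathrm{span}(S)$, but you yourself flag the decisive compression step (showing the replacement of a layer by a nested set does not decrease $E$, and controlling the cross term) as an unresolved obstacle; as written this is a proof plan, not a proof, and nothing in the proposal substitutes for that step. The paper's argument is far more local and elementary: it supposes a set $S$ of $2^k+1$ cards beats $Q(2^k)$, picks a card $x$ lying in $y$ quads with the remaining $2^k$ cards $S'$, and uses the equality case of Theorem~\ref{thm:bound} --- a set of $2^k$ cards attains $\binom{2^k}{3}/4$ quads only if it is complete, i.e., an affine $k$-dimensional subspace, in which case no outside card forms a quad with three of its cards (exactly your observation about $p$), while if $y\ge 1$ each quad through $x$ yields a triple of $S'$ that cannot be completed inside $S'$, and distinct quads through $x$ yield triples sharing at most one card, so these losses are charged against the maximum for $S'$. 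That deletion-plus-rigidity argument needs no additive-energy or compression machinery; if you want to salvage your route, the missing energy inequality is precisely where the work lies, and you should expect to need the structural input (equality in Theorem~\ref{thm:bound} forces a subspace) in some form, since, as you note, purely counting constraints on the values $|S\cap(S+v)|$ are not tight.
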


\begin{proof}
Suppose this is not the case, and another selection $S$ of $\ell$ points has more quads. Let point $x \in S$ be a point that forms $y$ quads with other points. Denote the other $2^k$ cards as $S'$. If $y=0$, and we remove point $x$, we get that $\ell -1$ points form more quads than the proven maximum for $2^k$ points, which is impossible. Thus, point $x$ forms at least one quad with other points.

Let $a$, $b$, and $c$ be three points in $S'$ that $x$ forms a quad with. This means that $a$, $b$, and $c$ can not form a quad with another point in $S'$. Suppose $d$, $e$, and $f$ are some other three points in $S'$ that $x$ forms a quad with. Two triples $a$, $b$, $c$ and $d$, $e$, $f$ that form distinct quads with $x$ can share at most one point in common (as three points determine a quad). Thus, two triples point to different quads in $S'$. This means that the maximum number of quads within $S'$ is $Q(2^k)-y$. This implies that the maximum number of quads within the $2^k+1$ points is $Q(2^k)$, which contradicts the assumption.

Thus, for $\ell = 2^k+1$, we have $Q(\ell) \leq Q(\ell-1) = \frac{\binom {\ell-1}{3}}{4}$. The value $Q(\ell-1)$ is achievable within $2^k+1$ points when we have a $k$-dimensional hyperplane and an additional card that does not form a quad with other cards. Thus, for $\ell = 2^k+1$, we have $Q(\ell) = Q(\ell-1) = \frac{\binom {\ell-1}{3}}{4}$.
\end{proof}

\section{Complementary sets}
\label{sec:complementary}

Given a finite EvenQuads deck, two sets are called \textit{complementary} if they are disjoint, and their union is the whole deck. The following theorem provides the formula that connects the number of quads in two complementary sets. We denote the number of cards in a set $S$ as $|S|$.

\begin{theorem}
\label{thm:complementarysets}
Suppose $S$ and $T$ are two complementary sets in a finite EvenQuads deck. Suppose $s$ is the number of quads in $S$, and $t$ is the number of quads in $T$. Then
\begin{multline*}
s-t=\frac{1}{4} \left[ {|S| \choose 3} - {|T| \choose 3} \right] + \frac{1}{12} \left[ |S| {|T| \choose 2} - |T| {|S| \choose 2} \right] \\
= \frac{|S| - |T|}{24}(|S|^2 + |T|^2 - 3|S| - 3|T| +2).
\end{multline*}
\end{theorem}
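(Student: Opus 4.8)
The plan is to classify every quad of the whole deck according to how its four cards split between $S$ and $T$, and then to recover $s-t$ by double counting triples of cards. The key input is that the deck is all of $\mathbb{Z}_2^n$, hence closed under quad completion: any three distinct cards $\vec a,\vec b,\vec c$ lie in a unique quad, whose fourth card is $\vec a+\vec b+\vec c$. For $i\in\{0,1,2,3,4\}$, let $n_i$ be the number of quads in the deck having exactly $i$ of their four cards in $S$ (and therefore $4-i$ in $T$). Then $s=n_4$ and $t=n_0$, and the goal is to compute $n_4-n_0$.

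First I would count the triples of cards lying entirely in $S$. There are $\binom{|S|}{3}$ of them, and each extends to a unique quad, which must have at least three cards in $S$ and so is counted by $n_4$ or $n_3$. A quad counted by $n_4$ yields $\binom 43=4$ such triples, while a quad counted by $n_3$ yields exactly $\binom 33=1$; hence $\binom{|S|}{3}=4n_4+n_3$, and symmetrically $\binom{|T|}{3}=4n_0+n_1$. Next I would count the ``mixed'' triples with two cards in $S$ and one in $T$, of which there are $\binom{|S|}{2}|T|$. Each completes to a quad lying in $n_3$ (if its fourth card is in $S$) or in $n_2$ (if its fourth card is in $T$); a quad counted by $n_3$ supplies $\binom 32=3$ such triples and a quad counted by $n_2$ supplies $\binom 22\binom 21=2$, so $\binom{|S|}{2}|T|=3n_3+2n_2$, and symmetrically $\binom{|T|}{2}|S|=3n_1+2n_2$.

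Now the $n_2$ term is eliminated by subtracting the last two identities, giving $n_3-n_1=\tfrac13\bigl(\binom{|S|}{2}|T|-\binom{|T|}{2}|S|\bigr)$. Solving the first two identities for $n_4$ and $n_0$ and subtracting yields
\[
s-t=n_4-n_0=\tfrac14\Bigl(\binom{|S|}{3}-\binom{|T|}{3}\Bigr)-\tfrac14\bigl(n_3-n_1\bigr),
\]
and substituting the expression for $n_3-n_1$ produces exactly the first displayed formula in the statement. It then remains only to expand the binomial coefficients and simplify algebraically (using $|S|+|T|=2^n$ where convenient) to reach the factored form $\tfrac{|S|-|T|}{24}(|S|^2+|T|^2-3|S|-3|T|+2)$; this step is routine.

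The only place requiring care is the combinatorial bookkeeping in the middle: verifying that an all-$S$ triple can only complete to a quad of type $n_4$ or $n_3$, that a $(2,1)$-mixed triple can only complete to one of type $n_3$ or $n_2$, and counting correctly the multiplicities ($4$ and $1$ for the pure triples, $3$ and $2$ for the mixed triples) with which a quad of each type is hit. Once those multiplicities are pinned down, the rest is forced; notably, the total number of quads in the deck never needs to be computed, because the $n_2$ contribution cancels.
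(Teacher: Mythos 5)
Your proposal is correct and follows essentially the same argument as the paper: both classify quads by how many of their cards lie in $S$, derive the same four double-counting identities (your $\binom{|S|}{3}=4n_4+n_3$, $\binom{|S|}{2}|T|=3n_3+2n_2$, and their $T$-counterparts are exactly the paper's equations, which it obtains via ordered quads rather than unordered triples), and eliminate the $n_2$ term in the same way. The multiplicities and the final algebra check out, so there is nothing to fix.
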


\begin{proof}
Let $q_i$ be the number of quads with exactly $i$ cards in $S$, for $1 \leq i \leq 3$. Define an \textit{ordered quad} as an ordered 4-tuple of cards that form a quad.

First, we count the number of ordered quads whose first three elements are from $T$ in two different ways. One way is to choose three elements from $T$ in $6{|T| \choose 3}$ ways and then add the last card that makes them a quad. Another way is that we can take a quad with all 4 cards in $T$ and order it in 24 ways, or a quad with exactly one card in $S$ and order it in 6 ways (this covers all of the ordered quads that have their first three elements from $T$). Thus, $6{|T| \choose 3} = 24t + 6q_1$, or
\[{|T| \choose 3} = 4t + q_1.\]

Next, we count, in two ways, the number of ordered quads whose first two elements are from $T$ and whose third element is from $S$. One way is to pick the first three elements in $2{|T| \choose 2}|S|$ ways and then complete them into a quad. Another way is to take a quad with one card from $S$ and three cards from $T$ and order it in 6 ways, or a quad with two cards from $S$ and two cards from $T$ and order it in one of the 4 ways. Thus, $2{|T| \choose 2}|S| = 6q_1 + 4q_2$, or
\[|S|{|T| \choose 2} = 3q_1 + 2q_2.\]

Similarly, by swapping the roles of $S$ and $T$, we get
\[|T|{|S| \choose 2} = 2q_2 + 3q_3 \quad \textrm{and} \quad{|S| \choose 3} = q_3 + 4s.\]

From the first and the last equations, we get
\[{|S| \choose 3} - {|T| \choose 3}  = 4(s-t) + q_3 -q_1.\]
From the other two equations, we get
\[q_3 -q_1 = |T|{|S| \choose 2} - |S|{|T| \choose 2}.\]
Combining this, we get the desired result.
\end{proof}

The theorem helps us estimate the number of quads. If we know a possible number of quads among $\ell$ cards in a given deck of size $2^k$, then we know a possible number of quads among $2^k - \ell$ cards in the same deck. We get a very useful corollary from this theorem.

\begin{corollary}
\label{cor:packed_complements}
If we have a packed set $S$ in a given EvenQuads deck, then its complementary set is also packed.
\end{corollary}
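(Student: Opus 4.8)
The plan is to read the corollary straight off Theorem~\ref{thm:complementarysets}. The essential observation is that the right-hand side of the formula in that theorem depends only on the cardinalities $|S|$ and $|T|$, and not on which particular cards make up $S$ and $T$. Since $S$ and $T$ are complementary in one fixed EvenQuads deck of size $2^k$, their sizes are linked by $|S| + |T| = 2^k$; hence the difference $s - t$ equals a constant $c = \frac{|S| - |T|}{24}\bigl(|S|^2 + |T|^2 - 3|S| - 3|T| + 2\bigr)$ that is completely determined by $|S|$ and $k$. In particular, $s' - t' = c$ for \emph{every} pair of complementary sets $S', T'$ in this deck with $|S'| = |S|$ (and therefore $|T'| = |T|$).

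With this in hand, I would argue by contradiction. Being packed, $S$ attains the maximum number of quads among $|S|$-card subsets, and since $S$ lives inside the deck of size $2^k$, that maximum is attained inside this deck; so no $|S|$-card subset of the deck carries more than $s$ quads. Now suppose $T$ is not packed: then some set $T'$ of $|T|$ cards in the deck carries strictly more quads than $T$, say $t' > t$. Let $S'$ be the complement of $T'$ in the deck; then $|S'| = 2^k - |T| = |S|$, and Theorem~\ref{thm:complementarysets} applied to the pair $S', T'$ gives $s' - t' = c$. Hence $s' = t' + c > t + c = s$, so $S'$ is an $|S|$-card subset of the deck with more quads than $S$ --- a contradiction. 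Therefore $T$ is packed.

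The point requiring the most care is purely bookkeeping: one must keep every set inside the same ambient deck of size $2^k$, because that is what makes ``complement'' meaningful and, crucially, makes the cardinalities match --- the complement of a $|T|$-card set has exactly $|S| = 2^k - |T|$ cards, which is precisely why the constant $c$ is the same on both sides of the comparison. Beyond that, nothing more than substituting into the formula of Theorem~\ref{thm:complementarysets} is involved, so I do not anticipate a genuine obstacle; the real work was already carried out in proving that theorem.
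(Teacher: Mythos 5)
Your argument is correct and is essentially the paper's own proof: both rest on the single observation that the difference $s-t$ in Theorem~\ref{thm:complementarysets} depends only on $|S|$ and $|T|$ (hence only on $|S|$ and the deck size), so the two complementary sets are maximized simultaneously. Your explicit contradiction with the pair $S', T'$ merely unpacks the paper's one-line conclusion, so there is nothing genuinely different to compare.
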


\begin{proof}
Consider the complement $T$ to set $S$, and denote the number of quads in $T$ and $S$ as $t$ and $s$, respectively. The difference $s-t$ only depends on the sizes of sets $S$ and $T$. Thus, the number of quads is maximized/minimized in $S$ and $T$ simultaneously.
\end{proof}

Note that the studies of packed sets for SET and Quads are qualitatively different. In SET, since 3 is odd, the complementary set formula gives the total number of sets for a given set of cards and its complement, while in EvenQuads, the complementary set formula gives the difference.

\begin{example}
We know that the maximum number of quads given 0--8 cards is 0, 0, 0, 0, 1, 1, 3, 7, and 14, respectively. All these cases are realizable in a standard deck. Thus, for 56--64 cards, which form complements to the above sets in the standard deck, the maximum number of quads are 10416, 9765, 9145, 8555, 7995, 7462, 6958, 6482, and 6034, respectively.
\end{example}

\subsection{Function $F$}

We introduce function $F(\ell)$. We later conjecture that $F$ equals the maximum possible number of quads among $\ell$ cards. However, we define $F$ using properties of complementary sets and induction.

We assume that $F(0)=0$. Suppose $F(i)$ is defined for all integers in the range $[0\ldots 2^{k-1}]$, we now define it for $p \in (2^{k-1}, 2^k]$. Let $q = 2^k - p$, then $q \le 2^{k-1}$. This means $F(q)$ is already defined. Now, we define $F(p)$ as follows:
\[F(p) =F(q)+\frac{3\binom{p}{3}-q\binom{p}{2}+p\binom{q}{2}-3\binom{q}{3}}{12}.\]

\begin{example}
If $p=2^k$, then $q=0$, and we define $F(2^k)$ as 
\[F(0)+\frac{3\binom{2^k}{3}-0 \cdot \binom{2^k}{2}+2^k \cdot \binom{0}{2}-3\binom{0}{3}}{12} = \frac{\binom{2^k}{3}}{4}.\]
\end{example}

\section{Conjectures}
\label{sec:conjectures}

Before calculating the maximum number of quads among $\ell$ cards, $Q(\ell)$, we have a conjecture.

\begin{conjecture}
\label{conj:reduction}
If a set of $\ell$ cards is not contained in a complete set of $2^{\lceil\log_2 \ell\rceil}$ cards, then it has at most as many quads as an $(\ell-1)$-packed set.
\end{conjecture}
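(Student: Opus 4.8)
\medskip
\noindent\emph{Proof idea.}
The natural plan is to argue by strong induction on $\ell$, slicing $S$ with a hyperplane. Two degenerate cases should be cleared first. If some card $x\in S$ lies in no quad of $S$, then $S\setminus\{x\}$ has $\ell-1$ cards and exactly as many quads as $S$, so the bound $Q(\ell-1)$ is immediate; hence we may assume every card of $S$ lies in at least one quad. If $S$ is affinely independent, then no four of its cards sum to $\vec{0}$, so $S$ contains no quads at all; hence we may also assume $\dim\operatorname{aff}(S)\le\ell-2$. Write $d=\lceil\log_2\ell\rceil$, so that $2^{d-1}<\ell\le 2^d$, and by hypothesis $r:=\dim\operatorname{aff}(S)\ge d+1$.

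Translate so that $\vec{0}\in S$, pick a linear hyperplane $H\subset\mathbb{Z}_2^r$, and split $S=A\sqcup B$ with $A=S\cap H$ and $B=S\cap(\mathbb{Z}_2^r\setminus H)$, $|A|=a$, $|B|=b$. Since any quad contains an even number of cards of $B$, grouping quads by that number gives
\[
\#\mathrm{quads}(S)=\#\mathrm{quads}(A)+\#\mathrm{quads}(B)+\sum_{v\in H\setminus\{\vec{0}\}}r_A(v)\,r_B(v),
\]
where $r_A(v)=\#\{\{x,y\}\subseteq A:x+y=v\}$ and $r_B(v)$ is defined likewise, so that $\sum_v r_A(v)=\binom{a}{2}$ and $\sum_v r_B(v)=\binom{b}{2}$. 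The plan is then: bound $\#\mathrm{quads}(A)$ and $\#\mathrm{quads}(B)$ using the inductive hypothesis (falling back on Theorem~\ref{thm:bound} when $A$ or $B$ fills a flat of the expected dimension); bound the mixed term by an additive-energy estimate, starting from $\sum_v r_A(v)\,r_B(v)\le\lfloor b/2\rfloor\binom{a}{2}$ and refining it; and reassemble, using the recursion defining $F$ together with the complementary-set identity of Theorem~\ref{thm:complementarysets}, to recognize the total as $Q(\ell-1)$ (whose value is certified from below by explicit sets such as $\{0,1,\dots,\ell-2\}$ and flats-with-an-extra-point). A promising choice of $H$ is one minimizing $a$, which forces $a\le\lfloor\ell/2\rfloor$ and keeps the mixed term as small as possible; one should also feel free to average over the choice of $H$.

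The crux — and the reason the statement is posed here as a conjecture — is the mixed term. The gap between $Q(\ell)$, attained only inside a $d$-flat, and $Q(\ell-1)$ is large (for instance $Q(8)=14$ but $Q(7)=7$), so the proof must show that spanning even one extra dimension destroys a constant fraction of the potential quads, not just a single quad; the naive estimate $\sum_v r_A(v)\,r_B(v)\le\lfloor b/2\rfloor\binom{a}{2}$ is far too weak for this. Concretely, the mixed term is maximized when $A$ and $B$ are (near-)$2$-flats whose direction planes agree, but that configuration forces $A\cup B$ to be a $3$-flat, contradicting $\dim\operatorname{aff}(S)\ge d+1$; turning this rigidity into a quantitative bound on $\sum_v r_A(v)\,r_B(v)$ — and simultaneously controlling how much of the extremal behavior of $\#\mathrm{quads}(A)$, $\#\mathrm{quads}(B)$, and the mixed term can co-occur — is the main obstacle. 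Finally, extra care is needed at the thresholds $\ell=2^k+1$, where $Q(\ell-1)=\binom{2^k}{3}/4$ jumps up and the target becomes comparatively generous; there the induction should be anchored by Proposition~\ref{prop:afterpower2} and Theorem~\ref{thm:bound}.
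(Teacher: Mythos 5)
You have not proved the statement, and neither does the paper: this is Conjecture~\ref{conj:reduction}, which the paper explicitly leaves open (its later results are all conditional on it). Your setup is sound as far as it goes --- the two reductions (discard a card lying in no quad; note that the hypothesis forces $\dim\operatorname{aff}(S)\ge\lceil\log_2\ell\rceil+1$) are correct, and the parity identity
\[
\#\mathrm{quads}(S)=\#\mathrm{quads}(A)+\#\mathrm{quads}(B)+\sum_{v\in H\setminus\{\vec 0\}}r_A(v)\,r_B(v)
\]
for a slicing by a linear hyperplane $H$ is valid, since every quad meets the complement of $H$ in an even number of cards. But everything after that is a plan rather than an argument, and you say so yourself: the entire content of the conjecture sits in the step you label the crux, namely a quantitative bound on the mixed term $\sum_v r_A(v)r_B(v)$ (together with control of how extremal $\#\mathrm{quads}(A)$ and $\#\mathrm{quads}(B)$ can simultaneously be) strong enough to show that leaving the $2^{\lceil\log_2\ell\rceil}$-flat costs a constant fraction of $\binom{\ell}{3}/4$, not just a few quads --- e.g.\ at $\ell=8$ the required loss is $14-7=7$, half of the maximum. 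No such estimate is supplied, and the ``reassemble, using the recursion defining $F$ \ldots to recognize the total as $Q(\ell-1)$'' step is not carried out even schematically.

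Two smaller points. First, the claim that a hyperplane minimizing $a$ forces $a\le\lfloor\ell/2\rfloor$ is unsubstantiated: since you translated so that $\vec 0\in S$, the origin lies in every linear hyperplane, and averaging over hyperplanes gives an expected size of roughly $1+(\ell-1)/2$, so the bound on the minimum needs an argument (or you should allow affine hyperplanes). Second, the inductive appeal to ``$Q(\ell-1)$ certified from below by explicit sets'' conflates the lower bound on $Q(\ell-1)$ (easy) with the upper bound on $\#\mathrm{quads}(S)$ (the hard direction); only the latter is at issue. In short: your reductions and the slicing identity are correct and could be a reasonable frame for attacking the conjecture, but the proposal contains no proof of the statement, which remains open in the paper as well.
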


Now, we have the following proposition.

\begin{proposition}
\label{prop:contains}
If Conjecture~\ref{conj:reduction} holds, then there exists an $\ell$-packed set that is contained in a complete set of cards of dimension $2^{\lceil\log_2 \ell\rceil}$.
\end{proposition}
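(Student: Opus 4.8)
The plan is to argue by strong induction on $\ell$, writing $n = \lceil \log_2 \ell\rceil$ throughout. First I would record two auxiliary facts and use them freely. The first is monotonicity, $Q(\ell) \ge Q(\ell-1)$ for every $\ell \ge 1$: starting from an $(\ell-1)$-packed set, enlarge the ambient deck if it happens to be exactly filled, adjoin any new card, and note that every old quad survives, so the count does not drop. The second is that a complete set of $2^m$ cards is an affine subspace of dimension $m$, which — after a harmless translation making it linear and after enlarging the deck if needed — can be extended to a complete set of $2^{m'}$ cards for any $m' \ge m$ by appending basis vectors. Both are routine and essentially already in the preliminaries.

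For the base case, and more generally whenever $\ell = 2^n$ is a power of $2$, the Proposition on complete sets yields a complete set $C$ of $2^n$ cards, and Theorem~\ref{thm:bound} shows $C$ contains $\binom{2^n}{3}/4$ quads, the maximum possible for $2^n$ cards; hence $C$ is itself an $\ell$-packed set contained in a complete set of $2^{\lceil\log_2\ell\rceil}$ cards. (This also disposes of $\ell = 1$.)

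For the inductive step I would take $\ell$ not a power of $2$, assume the statement for all smaller values, and pick any $\ell$-packed set $S$. Applying Conjecture~\ref{conj:reduction} to $S$, if $S$ is contained in a complete set of $2^n$ cards we are done; otherwise the conjecture gives $Q(\ell) \le Q(\ell-1)$, which together with monotonicity forces $Q(\ell) = Q(\ell-1)$. Then I would invoke the induction hypothesis on $\ell - 1$ to get an $(\ell-1)$-packed set $S'$ inside a complete set $C$ of $2^m$ cards, $m = \lceil\log_2(\ell-1)\rceil$. Since $\ell$ is not a power of $2$ we have $2^{n-1} < \ell < 2^n$, hence $2^{n-1} \le \ell-1 < 2^n$ and $m \in \{n-1,n\}$; extend $C$ to a complete set $C'$ of $2^n$ cards containing $S'$. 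As $|C'| = 2^n \ge \ell > \ell-1 = |S'|$, choose a card $w \in C' \setminus S'$; then $S' \cup \{w\}$ has $\ell$ cards, lies in the complete set $C'$, and retains all $Q(\ell-1) = Q(\ell)$ quads of $S'$, so it is $\ell$-packed, completing the induction.

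The hard part will not be deep — granted the conjecture, the whole argument is bookkeeping — but I expect two points to need care. One is the step that upgrades ``at most as many quads as an $(\ell-1)$-packed set'' to the exact equality $Q(\ell) = Q(\ell-1)$ via monotonicity, since everything downstream rests on it. The other is the check that $\lceil\log_2(\ell-1)\rceil$ is at most one less than $\lceil\log_2\ell\rceil$ when $\ell$ is not a power of $2$, so that a single one-step extension of the ambient complete set always suffices; alongside this one should make explicit that enlarging the deck preserves quads and the completeness of a set, legitimizing the passage of $S'$ and $C$ into a deck of size $2^n$.
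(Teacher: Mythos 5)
Your proposal is correct and takes essentially the same route as the paper: use Conjecture~\ref{conj:reduction} to drop to a smaller packed set lying inside a complete set of the right size, then pad it back up to $\ell$ cards with cards from that complete set. You simply reorganize the paper's informal ``keep applying the conjecture'' descent as a one-step strong induction, making explicit the monotonicity $Q(\ell)\ge Q(\ell-1)$, the power-of-2 base case, and the extension of the ambient complete set, details the paper leaves implicit.
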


\begin{proof}
Suppose, on the contrary, there does not exist an $\ell$-packed set that is contained in a complete set of cards, $S_c$, of dimension $2^{\lceil\log_2 \ell\rceil}$. Consider an $\ell$-packed set $S$, which, by our assumption, is not contained in $S_c$. Then, from Conjecture~\ref{conj:reduction}, set $S$ has at most the number of quads as an $(\ell-1)$-packed set. We can keep applying Conjecture~\ref{conj:reduction} until we get an $\ell'$-packed set (where $\ell'<\ell$), $S'$, that is contained in $S_c$ and has at least as many quads as $S$. We can then add, arbitrarily, $\ell-\ell'$ cards from $S_c \setminus S'$ into $S'$ to get a set $S''$ of $\ell$ cards that is contained in $S_c$ and that has at least as many quads as $S$, which is a contradiction.
\end{proof}

\begin{theorem}
\label{thm:mapping}
If Conjecture~\ref{conj:reduction} holds and $S$ is an $\ell$-packed set, then there is a symmetry of quads mapping $S$ to $\{0,1,\ldots,\ell-1\}$.
\end{theorem}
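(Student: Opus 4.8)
The plan is to start from Proposition~\ref{prop:contains}, which (assuming Conjecture~\ref{conj:reduction}) guarantees the existence of an $\ell$-packed set $S_0$ contained in a complete set $S_c$ of size $2^{\lceil \log_2 \ell\rceil}$. Since $S$ is also $\ell$-packed, $S$ and $S_0$ have the same number of quads, so it suffices to produce a symmetry of quads carrying $S_0$ to $\{0,1,\ldots,\ell-1\}$; composing with any quad-symmetry taking $S$ to $S_0$ would then finish the job — but in fact it is cleaner to argue directly that every $\ell$-packed set is equivalent to $\{0,1,\ldots,\ell-1\}$, which is what we really want. So the first step is to reduce to the case where the packed set sits inside a complete set $S_c$; here I would invoke Proposition~\ref{prop:contains} together with the affine-symmetry discussion, noting that by Proposition~\ref{prop:contains}'s proof we may actually assume $S \subseteq S_c$ up to a symmetry of quads (translate so that $\vec 0 \in S_c$, so $S_c$ is a $k$-dimensional subspace with $k = \lceil \log_2 \ell\rceil$).

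The second step is to identify $S_c$, as an affine space, with $\mathbb{Z}_2^k$, and then to identify $\mathbb{Z}_2^k$ with the integers $\{0,1,\ldots,2^k-1\}$ via binary expansion; under this identification quads correspond exactly to $4$-tuples with vanishing XOR, as recorded in the Preliminaries. The remaining task is purely combinatorial: show that \emph{any} $\ell$-element subset of $\{0,\ldots,2^k-1\}$ that is $\ell$-packed can be moved by an invertible $\mathbb{F}_2$-linear map (composed with a translation) to the initial segment $\{0,1,\ldots,\ell-1\}$. For this I would use Conjecture~\ref{conj:reduction} in the contrapositive form that an $\ell$-packed set loses nothing by being ``as clustered as possible'': applying the reduction argument from Proposition~\ref{prop:contains} inside $S_c$ shows the packed set can be taken to be a union $V \cup E$ where $V$ is a subspace of dimension $\lfloor \log_2 \ell \rfloor$ (or the largest power of $2$ below $\ell$) together with a carefully chosen coset-fragment $E$; iterating this ``binary'' decomposition is exactly what the initial segment $\{0,\ldots,\ell-1\}$ looks like when written in binary. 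I would then set up an explicit change of basis matching the nested subspaces arising from the decomposition of $S_0$ with the standard nested subspaces $\{0,\ldots,2^j-1\}$.

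The main obstacle I anticipate is the last point: turning the existence statement of Proposition~\ref{prop:contains} into a \emph{canonical} recursive structure on a packed set, and then checking that the recursion really is isomorphic, as a ``quad-structure,'' to the one exhibited by $\{0,1,\ldots,\ell-1\}$. Concretely, the difficulty is that Proposition~\ref{prop:contains} only tells us a packed set can be obtained by starting from a packed set of fewer cards inside $S_c$ and throwing in more cards arbitrarily — the arbitrariness has to be shown not to matter, i.e.\ that all resulting configurations are related by a symmetry fixing the smaller subspace setwise. This is where I expect to need the transitivity of the affine group $\mathrm{AGL}_k(\mathbb F_2)$ on suitable flags, plus an induction on $\ell$: the inductive hypothesis supplies a symmetry matching the ``first'' $2^{k-1}$ (or $\ell - q$) cards with an initial segment, and one then extends it over the remaining cards using that the stabilizer of a subspace still acts transitively enough on the complementary coset to align the leftover cards with $\{2^{k-1},\ldots,\ell-1\}$. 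Once that transitivity bookkeeping is done, the equality of quad-counts forces the extension to be a genuine quad-symmetry, and the theorem follows.
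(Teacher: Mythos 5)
Your first step (reducing, via Conjecture~\ref{conj:reduction} and Proposition~\ref{prop:contains}, to a packed set sitting inside a complete set $S_c$ of size $2^k$ with $k=\lceil\log_2\ell\rceil$, identified with $\{0,\ldots,2^k-1\}$) matches the paper. But the core of the argument --- showing that an \emph{arbitrary} packed subset of $\{0,\ldots,2^k-1\}$ can be carried by an affine symmetry to the initial segment --- is exactly where your proposal has a genuine gap, and you have correctly located but not closed it. Your plan rests on a structural claim that a packed set of size $\ell>2^{k-1}$ decomposes as a subspace $V$ of dimension $k-1$ (or $\lfloor\log_2\ell\rfloor$) together with a fragment of a coset; neither Conjecture~\ref{conj:reduction} nor Proposition~\ref{prop:contains} gives containment of a large complete set \emph{from below} (they only give containment \emph{in} a complete set from above), so this decomposition is precisely what needs proof, and assuming it is circular. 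Moreover, the mechanism you propose to finish --- transitivity of $\mathrm{AGL}_k(\mathbb{F}_2)$ on flags and of the subspace stabilizer on the complementary coset --- is not strong enough: once $V$ is aligned with $\{0,\ldots,2^{k-1}-1\}$, the residual symmetries act on the coset only through its affine group (or mere translations, if $V$ is fixed pointwise), and transitivity on points or flags does not let you move an arbitrary leftover subset onto the specific subset $\{2^{k-1},\ldots,\ell-1\}$; you would again need to know that the leftover fragment is itself packed and equivalent to an initial segment, which is the same problem one dimension down, and your proposal gives no lever to start that recursion.

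The paper's proof avoids the structural claim entirely by a complementation trick you did not use: inside the complete set $U$ of size $2^k$, Corollary~\ref{cor:packed_complements} (via Theorem~\ref{thm:complementarysets}) shows that $U\setminus S$ is packed whenever $S$ is, and $|U\setminus S|=2^k-\ell<2^{k-1}$, so the inductive hypothesis applies to the \emph{complement}. Mapping $U\setminus S$ to $\{0,\ldots,2^k-\ell-1\}$ forces $S$ onto the terminal segment $\{2^k-\ell,\ldots,2^k-1\}$, and the single explicit symmetry $x\mapsto x\oplus(2^k-1)$ then carries this to $\{0,1,\ldots,\ell-1\}$. That one step replaces all of the subspace-plus-fragment bookkeeping in your outline; without it (or an independent proof that a packed set of size $\ell>2^{k-1}$ contains a complete set of size $2^{k-1}$), your argument does not go through.
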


\begin{proof}
For $\ell < 4$, any set of $\ell$ cards does not contain a quad. For $\ell = 4$, a set of cards contains 1 quad if and only if the set belongs to the same plane. We proceed by induction. Suppose the theorem is true for $\ell \leq 2^{k-1}$. We now show that it has to be true for $2^{k-1} < \ell \leq 2^k$.

By Conjecture~\ref{conj:reduction}, we can assume that our packed set $S$ must be contained in a complete set $U$ of $2^k$ cards. All complete sets are equivalent, so we can assume that $U$ is the set $\{0,1,\ldots,2^k-1\}$. By studying complementary sets, we know that our set is packed if and only if its complement $U \setminus S$ is packed. 

As the size of the set $U \setminus S$ does not exceed $2^{k-1}$, by the inductive hypothesis we can assume that $U \setminus S$ are the carsds $\{0,1,\ldots,2^k- \ell -1\}$ in $U$. Thus, the set $S$ consists of cards $\{2^k-\ell,2^k -\ell +1,\ldots,2^k-1\}$. Finally, we apply the symmetry of bitwise xoring with $2^k-1$, mapping $S$ onto $\{0,1,\ldots,\ell-1\}$, completing the induction.
\end{proof}

In particular, this means that a notion of a packed set does not depend on the deck size. The number of quads in a packed set of size $\ell$ is the same for any deck that contains at least $\ell$ cards.

\begin{example}
If $\ell = 2^k-1$, where $k > 1$, then the maximum number of quads is achieved, according to Proposition~\ref{prop:contains} when our points form a $k$-dimensional hyperplane without a point. Thus, from all quads in a hyperplane, we need to subtract quads that do not contain a given point. The number of quads in the hyperplane is $\frac{{\ell+1 \choose 3}}{4}$. Consider a point $x$ in our hyperplane. We count the number of quads containing $x$. In our hyperplane, there are ${\ell \choose 2}$ pairs of cards that form triples with $x$; each of these triples is completed to a quad. However, each quad is counted three times. Thus, the number of quads containing $x$ is $\frac{{\ell \choose 2}}{3}$. The total number of quads in our set is 
\[\frac{{\ell+1 \choose 3}}{4} - \frac{{\ell \choose 2}}{3}.\]
This number can be simplified to $\frac{\ell(\ell-1)(\ell-3)}{24}$.
\end{example} 

For example, when $\ell=7$, this number is 7; when $\ell=15$, this number is 105. Here are the first ten terms for the corresponding sequence $a(n) = \frac{1}{24}(2^k - 1) (2^k - 2) (2^k - 4) = \frac{{2^k \choose 3}}{4} - \frac{{2^k-1 \choose 2}}{3}$ (starting from index 1):
\[0,\ 0,\ 7,\ 105,\ 1085,\ 9765,\ 82677,\ 680085,\ 5516245,\ 44434005.\]
%Table[(2^k - 1) (2^k - 2) (2^k - 4)/24, {k, 25}]
%0, 0, 7, 105, 1085, 9765, 82677, 680085, 5516245, 44434005, 356691797, 2858420565, 22886923605, 183173653845, 1465702348117, 11726871369045, 93819981518165, 750579894867285, 6004719330743637, 48038075335005525, 384305885439939925, 3074452214566442325, 24595638240733910357, 196765188022710130005, 1574121832569095148885
Given that powers of 2 can only have remainders 1, 2, or 4 modulo 7, we can conclude that $a(n)$ is divisible by 7. After dividing we get sequence  A006096 in the OEIS \cite{OEIS}:
\[0,\ 0,\ 1,\ 15,\ 155,\ 1395,\ 11811,\ 97155,\ 788035,\ 6347715,\ \ldots.\]

\section{The number of cards in a packed set}
\label{sec:ncps}

Now we show that $Q(\ell) = F(\ell)$.

\begin{theorem}
Assuming Conjecture~\ref{conj:reduction}, the number of quads in an $\ell$-packed set is $F(\ell)$.
\end{theorem}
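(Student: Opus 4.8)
The plan is to prove $Q(\ell) = F(\ell)$ by strong induction on $\ell$, combining the structural result of Theorem~\ref{thm:mapping} with the complementary-sets formula of Theorem~\ref{thm:complementarysets} and the recursive definition of $F$. The base cases $\ell \le 4$ are immediate from the values listed after Proposition~\ref{prop:afterpower2} together with $F(0)=0$ and the displayed evaluation $F(2^k) = \binom{2^k}{3}/4$ (which handles $\ell = 1, 2, 4$) and a direct check that $F(3) = 0$. For the inductive step, I would fix $k \ge 1$, assume $Q(i) = F(i)$ for all $i \le 2^{k-1}$, and take $\ell$ with $2^{k-1} < \ell \le 2^k$; set $q = 2^k - \ell$, so $q \le 2^{k-1}$ and the inductive hypothesis applies to $q$.

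The heart of the argument is to identify an explicit $\ell$-packed set and count its quads. By Theorem~\ref{thm:mapping} (which is available since we are assuming Conjecture~\ref{conj:reduction}), an $\ell$-packed set can be taken to be $S = \{0, 1, \ldots, \ell-1\}$ sitting inside the complete set $U = \{0, 1, \ldots, 2^k - 1\}$. Its complement in $U$ is $T = \{\ell, \ell+1, \ldots, 2^k - 1\}$, which has $|T| = q$ cards; applying the bitwise-xor-with-$(2^k-1)$ symmetry shows $T$ has the same number of quads as $\{0, 1, \ldots, q-1\}$, which by the inductive hypothesis is $Q(q) = F(q)$. Now I apply Theorem~\ref{thm:complementarysets} to the complementary pair $S, T$ inside the deck of size $2^k$: with $s = Q(\ell)$ and $t = F(q)$ we get
\[
Q(\ell) - F(q) = \frac{1}{4}\!\left[\binom{\ell}{3} - \binom{q}{3}\right] + \frac{1}{12}\!\left[\ell \binom{q}{2} - q \binom{\ell}{2}\right].
\]
Rearranging the right-hand side to match the numerator $3\binom{\ell}{3} - q\binom{\ell}{2} + \ell\binom{q}{2} - 3\binom{q}{3}$ over $12$ in the definition of $F$, this says exactly $Q(\ell) = F(q) + \big(3\binom{\ell}{3} - q\binom{\ell}{2} + \ell\binom{q}{2} - 3\binom{q}{3}\big)/12 = F(\ell)$, where the last equality is the defining recursion for $F(p)$ with $p = \ell$ (noting $q = 2^k - \ell$ is precisely the index used there, and $F(q)$ is already defined since $q \le 2^{k-1}$). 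This closes the induction.

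The only real subtlety — the step I expect to be the main obstacle, though it is routine rather than deep — is the algebraic verification that the right-hand side of the complementary-sets formula literally equals the increment $\big(3\binom{\ell}{3} - q\binom{\ell}{2} + \ell\binom{q}{2} - 3\binom{q}{3}\big)/12$ appearing in the definition of $F$; one just expands $\frac{1}{4} = \frac{3}{12}$ and checks the two bracketed expressions agree term by term. A secondary point to state carefully is the logical dependency: Theorem~\ref{thm:mapping} and Corollary~\ref{cor:packed_complements} are invoked to guarantee that $S = \{0,\ldots,\ell-1\}$ is genuinely a packed set and that its complement is packed too, so that the values $s$ and $t$ in Theorem~\ref{thm:complementarysets} are exactly $Q(\ell)$ and $Q(q)$ and not merely lower bounds. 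Once this bookkeeping is in place, the proof is short.
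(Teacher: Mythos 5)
Your proposal is correct and follows essentially the same route as the paper: induction over dyadic ranges, using Theorem~\ref{thm:mapping} to place a packed set as $\{0,\ldots,\ell-1\}$, Corollary~\ref{cor:packed_complements} to identify the complement in the deck of size $2^{\lceil\log_2\ell\rceil}$ as a packed set of size $q=2^k-\ell$, and Theorem~\ref{thm:complementarysets} to recover exactly the recursion defining $F$. The only cosmetic differences are that the paper treats $p=2^k$ as a separate (complete-set) case while you absorb it into the general formula with $q=0$, and that the paper argues from the $q$-packed set up to its complement rather than from the $\ell$-packed set down; neither affects correctness.
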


\begin{proof}
By Theorem~\ref{thm:mapping} we can assume that our $\ell$-packed set consists of cards $\{0,1,\ldots, \ell-1\}$.

We know that $Q(0) = Q(1) = F(0) = F(1) = 0$. Suppose the theorem is true for $p \le 2^{k-1}$. We now use induction on $k$. We prove it separately for $2^{k-1} < p < 2^k$, and $p = 2^k$.

If $p = 2^k$, then a $p$-packed set is a complete set with $\frac{\binom{2^k}{3}}{4}$ quads. We also know that $F(2^k) = \frac{\binom{2^k}{3}}{4}$. Thus, $Q(2^k) = F(2^k)$.

If $2^{k-1} < p < 2^k$, then we introduce $q = 2^k - p$, and, therefore, $q < 2^{k-1}$. By the induction hypothesis $Q(q) = F(q)$. We also know by Theorem~\ref{thm:mapping}, that the set of cards $\{0,1,\ldots, q-1\}$ is $q$-packed. As the number of quads in this set is maximized, the same is true by Corollary~\ref{cor:packed_complements} for its complement $\{q,q+1,\ldots, 2^k\}$ in the deck of size $2^k$. Thus the set $\{q,q+1,\ldots, 2^k\}$ is packed, and by symmetry the set $\{0,1,\ldots, p-1\}$ is also packed. Hence, by Theorem~\ref{thm:complementarysets} the number of quads in a $p$-packed set is 
\[Q(p) = Q(q)+\frac{3\binom{p}{3}-q\binom{p}{2}+p\binom{q}{2}-3\binom{q}{3}}{12} = F(q)+\frac{3\binom{p}{3}-q\binom{p}{2}+p\binom{q}{2}-3\binom{q}{3}}{12} = F(p).\]
\end{proof}

Table~\ref{table:Q} describes the values $Q(\ell)$ calculated above for the number of cards up to 63.

\begin{table}[ht!]
\begin{center}
\begin{tabular}{c|c}
0&0\\
1&0\\
2&0\\
3&0\\
4&1\\
5&1\\
6&3\\
7&7\\
8&14\\
9&14\\
10&18\\
11&26\\
12&39\\
13&55\\
14&77\\
15&105
\end{tabular}\quad
\begin{tabular}{c|c}
16&140\\
17&140\\
18&148\\
19&164\\
20&189\\
21&221\\
22&263\\
23&315\\
24&378\\
25&442\\
26&518\\
27&606\\
28&707\\
29&819\\
30&945\\
31&1085
\end{tabular}\quad
\begin{tabular}{c|c}
32&1240\\
33&1240\\
34&1256\\
35&1288\\
36&1337\\
37&1401\\
38&1483\\
39&1583\\
40&1702\\
41&1830\\
42&1978\\
43&2146\\
44&2335\\
45&2543\\
46&2773\\
47&3025
\end{tabular}\quad
\begin{tabular}{c|c}
48&3300\\
49&3556\\
50&3836\\
51&4140\\
52&4469\\
53&4821\\
54&5199\\
55&5603\\
56&6034\\
57&6482\\
58&6958\\
59&7462\\
60&7995\\
61&8555\\
62&9145\\
63&9765
\end{tabular}
\end{center}
\caption{Values of $Q(\ell)$: the conjectured maximum number of quads for $\ell$ cards.}
\label{table:Q}
\end{table}

\section{Partial sums of $Q(n)$}
\label{sec:partialsums}

Consider sequence $P(n) = Q(n+1) - Q(n)$, the sequence of partial differences of $Q(n)$. Starting from index zero, this sequence is the following:
\[0,\ 0,\ 0,\ 1,\ 0,\ 2,\ 4,\ 7,\ 0,\ 4,\ 8,\ 13,\ 16,\ 22,\ 28,\ 35,\ 0,\ 8,\ 16,\ 25,\ 32,\ 42,\ 52,\ 63, \ldots .\]
Correspondingly, we have that $Q(n)$ is the sequence of partial sums of $P(n)$:
\[Q(n+1) = Q(n) + P(n).\]

Plugging in the values into the OEIS \cite{OEIS}, we see that the same numbers appear as sequence A213673, defined as
\[\text{A213673}(n)=\frac{n^2-\text{A000695}(n)}{4}.\]

Sequence A000695 is called the Moser-de Bruijn sequence. It is an increasing sequence of sums of distinct powers of 4. Equivalently, these are numbers whose base-4 representation consists of only digits 0 and 1. In other words, these are numbers whose binary representations are nonzero only in even positions. The sequence starts as
\[0,\ 1,\ 4,\ 5,\ 16,\ 17,\ 20,\ 21,\ 64,\ 65,\ 68,\ 69,\ \ldots \]
We can also say that to calculate A000695$(n)$, we need to write $n$ in binary and evaluate the result in base 4.
 
We found a match for $P(n)$ in the OEIS database. Now, we need to prove that the sequences coincide.

\begin{theorem}
Sequence $P(n)$ is sequence A213673$(n)$.
\end{theorem}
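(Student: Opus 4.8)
\section*{Proof proposal}

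The overall strategy is to turn the reflection-based recursion that defines $F$ (recall $Q=F$, so $P(n)=F(n+1)-F(n)$) into a \emph{halving} recursion and to compare it with a halving recursion for A213673 inherited from the Moser--de Bruijn sequence. Writing $u$ for A000695, the description ``write $n$ in binary, read in base $4$'' gives $u(0)=0$, $u(2n)=4u(n)$, $u(2n+1)=4u(n)+1$; substituting into $b(n):=\text{A213673}(n)=\frac{n^{2}-u(n)}{4}$ yields at once
\[
b(0)=0,\qquad b(2n)=4b(n),\qquad b(2n+1)=4b(n)+n,
\]
and these relations determine $b$ uniquely.

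The core of the argument is the pair of identities
\begin{gather*}
F(2m)=8F(m)+\binom{m}{2},\\
F(2m+1)=4F(m)+4F(m+1)+\binom{m}{2},
\end{gather*}
which I would prove simultaneously by strong induction on the argument of $F$, the cases $n=0,1$ (where the second identity is self-referential) being checked directly from $F(0)=F(1)=0$. Abbreviate the increment in the definition of $F$ (equivalently $s-t$ of Theorem~\ref{thm:complementarysets}) by $C(p,q)=\frac{3\binom p3-q\binom p2+p\binom q2-3\binom q3}{12}=\frac{p-q}{24}(p^{2}+q^{2}-3p-3q+2)$. If $n=2m$ lies in the dyadic block $(2^{k-1},2^{k}]$, then $m$ lies in $(2^{k-2},2^{k-1}]$ and the reflection partner of $n$ is the smaller \emph{even} number $2^{k}-2m=2q$ with $q=2^{k-1}-m$; applying the definition of $F$ once, then the first identity (inductively) at $2q$, then the block-$(k-1)$ expression $F(m)=F(q)+C(m,q)$, reduces the claim to
\[
\binom{q}{2}+C(2m,2q)=8\,C(m,q)+\binom{m}{2},\qquad q=2^{k-1}-m,
\]
a short algebraic computation. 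If $n=2m+1$, the partner is the smaller \emph{odd} number $2^{k}-2m-1=2(q-1)+1$, so the second identity is available inductively there; substituting also the block-$(k-1)$ expressions for $F(m)$ and $F(m+1)$ collapses the claim to
\[
\binom{q-1}{2}+C(2m+1,2q-1)=4\,C(m,q)+4\,C(m+1,q-1)+\binom{m}{2},\qquad q=2^{k-1}-m,
\]
again a routine verification (both are polynomial identities in $m,q$, so checking a handful of values suffices).

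From the two identities the conclusion is immediate: $P(2m)=F(2m+1)-F(2m)=4\bigl(F(m+1)-F(m)\bigr)=4P(m)$, and, using the first identity at $m+1$, $P(2m+1)=F(2m+2)-F(2m+1)=4\bigl(F(m+1)-F(m)\bigr)+\binom{m+1}{2}-\binom{m}{2}=4P(m)+m$; also $P(0)=F(1)-F(0)=0$. Hence $P$ satisfies exactly the recursion and initial value that characterize $b=\text{A213673}$, so $P(n)=\text{A213673}(n)$ by a one-line induction on $n$.

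The step I expect to be the real work is the dyadic bookkeeping in the induction for the two $F$-identities: one must check that each of $2m,\ 2m+1,\ m,\ m+1$ falls in the block for which the reflection formula one wants to invoke is valid, and handle the boundary cases in which one of these equals a power of $2$ (there $C(p,q)=0$ because $p=q$, while the block index shifts), together with the self-referential base cases $n=0,1$. The polynomial identities themselves are mechanical.
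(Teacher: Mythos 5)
Your proposal is correct, and it takes a genuinely different route from the paper. The paper keeps the reflection (complementary-set) recursion as is: for $m$ in the dyadic block $(2^k,2^{k+1})$ with $N=2^{k+1}$, subtracting the reflection formulas at $m$ and $m+1$ gives $P(m)+P(N-m-1)=\frac{1}{6}\left(3m^2-3mN+3m+N^2-3N+2\right)$, and this is matched against the same reflection identity for A213673, which the paper extracts from the fact that $N-m-1$ is the bitwise complement of $m$, so that $\textrm{A000695}(m)+\textrm{A000695}(N-m-1)=\frac{N^2-1}{3}$; the induction then runs block by block, with $P(2^k)=0$ supplied by Proposition~\ref{prop:afterpower2}. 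You instead convert the reflection recursion into halving recurrences: the auxiliary identities $F(2m)=8F(m)+\binom m2$ and $F(2m+1)=4F(m)+4F(m+1)+\binom m2$ (I checked that the two polynomial identities your induction reduces to, namely $\binom q2+C(2m,2q)=8C(m,q)+\binom m2$ and $\binom{q-1}2+C(2m+1,2q-1)=4C(m,q)+4C(m+1,q-1)+\binom m2$, do hold identically, and your handling of the boundary cases $m=q$, where $C$ vanishes, is the right fix), which give $P(2m)=4P(m)$, $P(2m+1)=4P(m)+m$, exactly the recursion $b(2n)=4b(n)$, $b(2n+1)=4b(n)+n$ inherited from the Moser--de Bruijn rules $u(2n)=4u(n)$, $u(2n+1)=4u(n)+1$. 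The paper's argument is shorter once the digit-complement identity is observed and needs only one pass over each dyadic block; yours costs an extra induction with dyadic bookkeeping but avoids the complementation trick entirely, produces self-similar recurrences for $F$ and $P$ that are of independent interest, works purely from the defining recursion of $F$ (so it does not need Proposition~\ref{prop:afterpower2} or any quads-specific input beyond $Q=F$), and gives the cleaner characterization of $P$ as the unique sequence satisfying the halving recursion.
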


\begin{proof}
By Proposition~\ref{prop:afterpower2}, we have $Q(2^k)=Q(2^k+1)$ and therefore, $P(2^k)=0$. By one of the definitions of sequence A000695$(n)$, we have that A000695$(2^k)=2^{2k}$ and therefore A213673$(2^k)=0 = P(2^k)$.

Now, we proceed by induction. We assume that the statement is proven for any integer not greater than $2^k$. Let $m$ be an integer in the interval $[2^k+1,2^{k+1}-1]$. In particular, $m$ is not a power of 2. Let us denote $2^{k+1}$ by $N$. We can use the formula for complementary sets to get
\begin{equation}
\label{eq:m}
Q(m)=Q(N-m)+\frac{3\binom{m}{3}-(N-m)\binom{m}{2}+m\binom{N-m}{2}-3\binom{N-m}{3}}{12}.
\end{equation}
Since $N$ is also the smallest power of 2 that is at least $m+1$, we can use the formula for complementary sets for $m+1$ to get
\begin{equation}
\label{eq:m+1}
Q(m+1)=Q(N-m-1)+\frac{3\binom{m+1}{3}-(N-m-1)\binom{m+1}{2}+(m+1)\binom{N-m-1}{2}-3\binom{N-m-1}{3}}{12}.
\end{equation}

Subtracting Eq.~\ref{eq:m} from Eq.~\ref{eq:m+1} we get
\begin{align}
P(m)&=Q(m+1)-Q(m) \notag \\
&=Q(N-m-1)-Q(N-m)+\frac{1}{6}\left(3m^2-3mN+3m+N^2-3N+2\right) \notag \\
&=\frac{1}{6}\left(3m^2-3mN+3m+N^2-3N+2\right)-P(N-m-1). \notag
\end{align}

Given that $P(N-m-1) < 2^k$, by our assumption $P(N-m-1) = \text{A213673}(n)$. Thus
\begin{equation}
\label{eq:3}
P(m) = \frac{1}{6}\left(3m^2-3mN+3m+N^2-3N+2\right) - \textrm{A213673}(N-m-1).
\end{equation}
The binary representation of $N-m-1$ is obtained by complementing that of $m$. This means that
\[\textrm{A000695}(m) + \textrm{A000695}(N-m-1)=11\ldots 11_4\]
with $k+1$ ones in total. On the other hand, we have
\[11\ldots 11_4 = \frac{4^{k+1}-1}3=\frac{N^2-1}{3}.\] Using the definition of A213673, we obtain A000695$(n)=n^2 - 4\textrm{A213673}(n)$. Therefore, plugging this into Eq.~\ref{eq:3}, we get
\[
m^2 - 4\textrm{A213673}(m)+(N-m-1)^2 - 4\textrm{A213673}(N-m-1)=\frac{N^2-1}{3},
\]
which is equivalent to
\[
\textrm{A213673}(m) + \textrm{A213673}(N-m-1) = \frac{1}{6}\left(3m^2-3mN+3m+N^2-3N+2\right).
\]
Combining this with Eq.~\ref{eq:3}, we get $P(n) = \textrm{A213673}(n)$.
\end{proof}

\section{Acknowledgments}

We are grateful to the MIT PRIMES STEP program and its director, Slava Gerovitch, for allowing us the opportunity to do this research. We also thank  Pr.~Yufei Zhao for consulting us.

\end{document}